\newtheorem{thm}{Theorem}
\newtheorem{defi}[thm]{Definition}
\newtheorem{rem}[thm]{Remark}
\newtheorem{nota}[thm]{Notation}
\newtheorem{exa}[thm]{Example}
\newtheorem{ack}[thm]{Acknowledgement}
\newtheorem{conj}[thm]{Conjecture}
\newcommand\be{\begin{equation}}
\newcommand\ee{\end{equation}} 
\def\bdefi{\begin{defi}\rm}
\def\edefi{\end{defi}}
\def\bnota{\begin{nota}\rm}
\def\enota{\end{nota}}
\def\rr{\mathfrak{r}}
\def\bb{\mathfrak{b}}
\def\ZF{\textup{\textsf{ZF}}}
\def\({\textup{(}}
\def\){\textup{)}}
\def\RCAo{\textup{\textsf{RCA}}_{0}^{\omega}}
\def\bye{\end{document}}
\def\N{{\mathbb  N}}
\def\Q{{\mathbb  Q}}
\def\R{{\mathbb  R}}
\def\SS{\textup{\textsf{S}}}
\def\J{\mathcal{J}}
\def\di{\rightarrow}
\def\asa{\leftrightarrow}
\def\AC{\textup{\textsf{AC}}}
\def\NIN{\textup{\textsf{NIN}}}
\def\NFP{\textup{\textsf{NFP}}}
\def\HBU{\textup{\textsf{HBU}}}
\def\lex{\textup{\textsf{lex}}}
\def\eps{\varepsilon}
\begin{document}
\mainmatter              
\title{Between Turing and Kleene}
\titlerunning{Between Turing and Kleene}  
%
\author{Sam Sanders}
\authorrunning{S.\ Sanders} 
%
\tocauthor{Sam Sanders}
\institute{
Department of Philosophy II, RUB Bochum, \\
Universit\"atsstrasse 150, 44780 Bochum, Germany \\ 
\email{sasander@me.com}}

\setcounter{tocdepth}{3}
\setcounter{page}{0}
\tableofcontents
\thispagestyle{empty}
\newpage

\maketitle              

\begin{abstract}
Turing's famous `machine' model constitutes the first intuitively convincing framework for \emph{computing with real numbers}.  
Kleene's computation schemes S1-S9 extend Turing's approach to \emph{computing with objects of any finite type}.  
Both frameworks have their pros and cons and it is a natural question if there is an approach that marries the best of both the Turing and Kleene worlds.  
In answer to this question, we propose a considerable extension of the scope of Turing's approach.  Central is a fragment of the Axiom of Choice involving \emph{continuous} choice functions, going back to Kreisel-Troelstra and intuitionistic analysis.  Put another way, we formulate a relation `is computationally stronger than' involving \textbf{third-order} objects that overcomes (many of) the pitfalls of the Turing and Kleene frameworks. 
\keywords{Computability theory, Kleene S1-S9, Turing machines}
\end{abstract}

\section{Between Turing and Kleene computability}
\subsection{Short summary}\label{sintro}
In a nutshell, we propose a sizable extension of the scope of Turing's `machine' model of computation (\cite{tur37}), motivated by a fragment of the Axiom of Choice involving \emph{continuous} choice functions, going back to Kreisel-Troelstra and intuitionistic analysis (\cite{KT}).  In particular, we formulate a relation `is computationally stronger than' involving \textbf{third-order} objects \emph{but} still based on Turing computability by and large. 

\medskip

The interested reader will find the aforementioned extension discussed in more detail in Section \ref{sketchy}, along with a critical discussion of the scope of our extension. 
%
%
The critical reader will learn about the pressing need for the aforementioned extension in Section \ref{crit}.
In particular, the latter section seeks to alleviate worries that existing frameworks are somehow sufficient for our (foundational) needs.  
The (problems involving the) representation of third-order objects via second-order ones is a particularly important `case in point'.  

\medskip

Next, some elegant results in our proposed extension are listed in Section~\ref{main} pertaining to the following topics:
\begin{itemize}
\item convergence theorems for nets in the unit interval (Section \ref{conetx}),
\item covering theorems for the unit interval $\R$ (Section \ref{cov}),
\item the uncountability of the real numbers $\R$ (Section \ref{flonk}),
\item discontinuous functions on the real numbers $\R$ (Section \ref{disc}).
\end{itemize}
We note that all our results are part of classical mathematics, while we have found 
constructive mathematics highly inspiring on our journey towards this paper.  We will assume familiarity with Turing-style computability theory (\cite{zweer}) 
and higher-order primitive recursion like in G\"odel's system $T$ (\cite[p.\ 74]{longmann}); knowledge of Kleene's higher-order computability theory, in particular the computation schemes S1-S9 (see \cite{kleeneS1S9, longmann}),
is useful but not essential.

\medskip

Finally, we will discuss a number of theorems of real analysis and the following remark discusses how the representations of real numbers can be done in a straightforward and non-intrusive way. 
\begin{rem}[Representation of real numbers]\rm\label{fooker}
Kohlenbach's `hat function' from \cite[p.\ 289]{kohlenbach2} guarantees that every element of $\N^{\N}$ defines a real number via the well-known representation of reals as fast-converging Cauchy sequences.  
Despite the definition of the latter being $\Pi_{1}^{0}$, a quantifier `$(\forall x\in \R)$' amounts to a quantifier over $\N^{\N}$.

\medskip

Moreover, Kohlenbach's `tilde' function from \cite[Def.\ 4.24]{kohlenbach3} guarantees that `$(\forall x\in [0,1])$' also just amounts to a quantifier over $\N^{\N}$, despite $0\leq_{\R}x\leq_{\R}1$ being $\Pi_{1}^{0}$ (in addition).  
These functions ensure a smooth treatment of $\R$, $[0,1]$, and $2^{\N}$ and functions between such spaces.  We will \textbf{always} assume that real numbers and $\R\di \R$-functions are given in this way, i.e.\ as in the aforementioned references \cite{kohlenbach2, kohlenbach3}, so as to ensure a smooth treatment.
\end{rem}

\subsection{Extending the scope of Turing computability}\label{sketchy}
In this section, we discuss the extension of Turing computability mentioned in Section \ref{sintro}.  
In particular, we introduce this new concept in Section \ref{nore} and discuss its scope in Section \ref{more}.
The reader will have a basic understanding of Turing computability theory (\cite{zweer}) and higher-order primitive recursion like G\"odel's system $T$ (\cite[p.\ 74]{longmann}).

\subsubsection{A new notion of reduction}\label{nore}
In this section, we formulate \eqref{koch5}, which is a relation formalising `is computationally stronger than' involving \textbf{third-order} objects \emph{but} still based on Turing computability. 
We first need some preliminaries, starting with \eqref{koch}. 

\medskip

First of all, many theorems in e.g.\ analysis can be given the form  
\be\label{koch}
(\forall Y: \N^{\N}\di \N)(\exists x \in \N^{\N})A(Y, x), 
\ee
where $\N^{\N}$ is the Baire space and $\N$ is the set of natural numbers.   Indeed, as discussed in Remark \ref{fooker}, some basic 
primitive recursive operations relegate the coding of real numbers (via elements of $\N^{\N}$) to the background.   Moreover, a list of theorems that can be brought in the form \eqref{koch} can be found in Example \ref{exa1} below, while we discuss the scope of theorems that can be brought in this form at the end of this section and in Section \ref{more}. 

\medskip

Secondly, to improve readability, one often uses type theoretic notation in \eqref{koch}, i.e.\ $n^{0}$ for type $0$ objects $n\in \N$, $x^{1}$ for type $1$ objects $x\in \N^{\N}$, and $Y^{2}$ for type $2$ objects $Y:\N^{\N}\di \N$. 
We will only occasionally need type $3$ objects, which map type 2 objects to natural numbers.  We generally use Greek capitals $\Theta^{3}, \Lambda^{3}, \dots $ for such objects.  

\medskip

Thirdly, to compare the logical strength of theorems of the form \eqref{koch}, one establishes results of the following form over weak systems:
\be\label{koch2}
(\forall Y^{2})(\exists x^{1})A(Y, x)\di (\forall Z^{2})(\exists y^{1})B(Z, x),
\ee
as part of Kohlenbach's \emph{higher-order Reverse Mathematics} (see \cite{kohlenbach2} for an introduction).
The computational properties of \eqref{koch} and \eqref{koch2} following S1-S9 can then be studied as follows:  let $\Theta^{3}$ and $\Lambda^{3}$ be \emph{realisers} for the antecedent and consequent of \eqref{koch2}
i.e.\ $(\forall Y^{2})A(Y, \Theta(Y))$ and $(\forall Z^{2})B(Z, \Lambda(Z))$.   

\medskip

A central computability theoretic question concerning \eqref{koch2} is whether a realiser $\Theta^{3}$ for the antecedent of \eqref{koch2} computes, in the sense of S1-S9, a realiser $\Lambda^{3}$ for the consequent of \eqref{koch2}, i.e.\ whether there is a Kleene algorithm with index $e\in \N$ satisfying the following:
\be\label{koch3}
(\forall \Theta^{3})\big[(\forall Y^{2})A(Y,\Theta(Y) )\di (\forall Z^{2})B(Z, \{e\}(\Theta, Z))\big].
\ee
Next, we list some theorems that have been studied via the above paradigm based on \eqref{koch3} and S1-S9.
\begin{exa}[Some representative theorems]\label{exa1}\rm~
\begin{itemize}
\item The Lindel\"of, Heine-Borel, and Vitali covering theorems involving uncountable coverings (\cite{dagsam, dagsamII, dagsamV}),
\item The Lebesgue number lemma (\cite{dagsamV, dagsamVII}),
\item The Baire category theorem (\cite{dagsamVII}),
\item  Convergence theorems for nets (\cite{samnetspilot,samcie19, samwollic19}),
\item Local-global principles like {Pincherle's theorem} (\cite{dagsamV}),
\item The uncountability of $\R$ and the Bolzano-Weierstrass theorem for countable sets in Cantor space (\cite{dagsamX, dagsamXI}),
\item Weak fragments of the Axiom of (countable) Choice (\cite{dagsamIX}).
\item Basic properties of functions of bounded variation, like the Jordan decomposition theorem (\cite{dagsamXII}).
\end{itemize}
Many more theorems are equivalent -in the sense of higher-order RM as in \cite{kohlenbach2}- to the theorems in the above list, as can be found in the associated references. 
\end{exa}
Fourth, for all the reasons discussed in Section \ref{crit}, we formulate a version of \eqref{koch3} based on Turing computability as follows: 
\be\label{koch5}
(\forall Z^{2}, x^{1})\big[A(t(Z), x)\di  [{\{e\}^{s(Z, x)}\downarrow} \wedge B(Z, \{e\}^{s(Z, x)})]\big],
\ee
where $s^{2\di 1}, t^{2\di 2}$ are terms of G\"odel's $T$ and `$\{e\}^{X}$' is the $e$-th Turing machine with oracle $X\subset \N$. 
We note that \eqref{koch5} readily\footnote{For $e\in \N$ and $s^{2\di 1}, t^{2\di 2}$ as in \eqref{koch5}, define $e_{0}\in \N$ as the Kleene algorithm such that $\{e_{0}\}(\Theta, Z):= \{e\}^{s(Z,\Theta(t(Z)))}$, which is total by assumption.} implies \eqref{koch3}; we discuss the generality of \eqref{koch5} at the end of this section. 

\medskip

In line with the nomenclature of computability theory, we call the antecedent and consequent of \eqref{koch2} `problems' and say that 
\begin{center}
\emph{solving the problem} $ (\forall Z^{2})(\exists y^{1})B(Z, x)$ \emph{\textbf{$N$-reduces} to solving the problem} $(\forall Y^{2})(\exists x^{1})A(Y, x)$
\end{center}
in case \eqref{koch5} holds for the parameters mentioned.  
We view the $N$-reduction relation as `neutral' between the Turing and Kleene framework and the reader readily verifies that $N$-reduction is transitive.  
In case the term $s(Z, x)$ can be replaced by a term $u(x)$, i.e.\ the latter has no access to $Z$, we refer to \eqref{koch5} as \textbf{strong} $N$-reduction.  

\medskip

Finally, the critical reader may wonder about the generality of \eqref{koch5}.  
The latter is quite general, for the following two reasons.
\begin{itemize}
\item It is an empirical observation based on \cite{dagsam, dagsamII, dagsamIII, dagsamV, dagsamVI, dagsamVII, dagsamIX, dagsamX, dagsamXI, dagsamXII} that \emph{positive} results in S1-S9 computability theory can be witnessed by terms of G\"odel's $T$ of low complexity.  In this light, there is no real loss of generality if we use terms of G\"odel's $T$ as in \eqref{koch5}. 
\item A theorem of (third-order) ordinary mathematics generally has the form \eqref{koch}, \emph{unless} the former implies the existence of a discontinuous function on $\R$.  
In the latter case, an `indirect' treatment is still possible via the so-called \emph{Grilliot's trick}, which we sketch in Section \ref{krilli}.
\end{itemize}
Like the reader, we feel that the second item deserves a more detailed explanation, which is in Section \ref{more}.  Regarding the first item, intellectual honesty compels us to admit that 
many of our S1-S9 results are witnessed by terms of G\"odel's $T$ \emph{additionally involving} Feferman's search operator (already found in Hilbert-Bernays \cite{hillebilly2}) defined for any $f^{1}$ as:
\be\label{frlu2}
\mu(f):=
\begin{cases}
\textup{the least $n^{0}$ such that $f(n)=0$} & (\exists m^{0})(f(m)=0)\\
0 & \textup{otherwise}
\end{cases}.
\ee
While not strictly necessary always, it is convenient to have access to $\mu^{2}$ as we then do not have to worry how spaces like $[0,1]$ or $2^{\N}$ are represented.  
Based on this observation, we introduce the following:
\begin{center}
\emph{solving the problem} $ (\forall Z^{2})(\exists y^{1})B(Z, x)$ \emph{\textbf{$\mu N$-reduces} to solving the problem} $(\forall Y^{2})(\exists x^{1})A(Y, x)$
\end{center}
in case \eqref{koch5} holds for the parameters mentioned except that $t(Z)$ is replaced by $t(Z, \mu^{2})$.  Then `strong' $\mu N$-reduction is defined similarly.  

\medskip

Finally, one could study \eqref{koch5} for other extensions of G\"odel's $T$, e.g.\ involving `minimization' (see \cite[\S5.1.5]{longmann}), but \eqref{koch5} seems more salient.  

\subsubsection{Continuous and discontinuous functionals}\label{more}
We discuss the motivation behind our notion of $N$-reduction and establish its scope.  
To this end, we have to make the following classical case distinction.  
\begin{itemize}
\item {If} a given third-order theorem is \textbf{consistent} with Brouwer's \emph{continuity theorem} that all functions on $\R$ are continuous (\cite{brouw}), {then} we can \textbf{directly} analyse it via $N$-reduction. 
\item If a given third-order theorem implies the existence of a discontinuous function on $\R$, we can \textbf{indirectly} analyse it via $N$-reduction based on {Grilliot's trick}, where the latter is sketched in Section \ref{krilli}.  
\end{itemize}
To make sense of the above, we first sketch the `standard' higher-order generalisation of (second-order) comprehension, exemplified by Kleene's $\exists^{2}$ as in \eqref{frlu}.  
We then discuss another (less famous) formulation of comprehension, called the \emph{neighbourhood function principle} as in Definition \ref{NFP}, a fragment of the Axiom of Choice involving \emph{continuous} choice functions, going back to intuitionistic analysis (\cite{KT, keuzet}).

\medskip

First of all, the commonplace \emph{one cannot fit a round peg in a square hole} has an obvious counterpart in computability theory: a type 2 functional cannot be the oracle of a Turing machine.  
Nonetheless, a \emph{continuous} type $2$ functional can be \emph{represented} by a type $1$ \emph{Kleene associate} as in Definition \ref{hunki}, where we employ the same\footnote{In particular, $\sigma^{0^{*}}$ is a finite sequence in $\N$ with length $|\sigma|$ and we assume the well-known coding of such finite sequences by natural numbers. Moreover, $\overline{f}n$ is the finite sequence $(f(0), \dots, f(n-1))$ for any $f^{1}$ and $n^{0}$, and any $f^{0^{*}}$ in case $|f|\leq n$.} notations as in \cite{kohlenbach4}. Associates \emph{do} `fit' as oracles of Turing machines.  

\begin{defi}[Kleene associate from \cite{kohlenbach4}]\label{hunki}~
\begin{itemize}
\item A function $\alpha^{1}$ is a \emph{neighbourhood function} if 
\begin{itemize}
\item $(\forall \beta^{1})(\exists n^{0})(\alpha(\overline{\beta}n)>0)$ and
\item $(\forall \sigma^{0^{*}}, \tau^{0^{*}} )(  \alpha(\sigma)>0\di \alpha( \sigma*\tau)=\alpha(\sigma) )$.
\end{itemize}
\item A function $\alpha^{1}$ is a \(Kleene\) \emph{associate} for $Y^{2}$ if 
\begin{itemize}
\item $(\forall \beta^{1})(\exists n^{0})(\alpha(\overline{\beta}n)>0)$ and
\item $(\forall \beta^{1}, n^{0})(\textup{$n$ is least s.t.\ }\alpha(\overline{\beta}n)>0 \di \alpha(\overline{\beta}n)=Y(\beta)+1 )$.
\end{itemize}
\end{itemize}
As in \cite[\S4]{kohlenbach4}, we additionally assume that an associate is a neighbourhood function, as the former can readily be converted to the latter. 
\end{defi}
Hence, we should specify that a \textbf{discontinuous} type two functional cannot be the oracle of a Turing machine.  Now, the archetypal example of a discontinuous function is Kleene's quantifier $\exists^{2}$ defined as:  
\be\label{frlu}
(\forall f^{1})\big[~\underline{(\exists n^{0})(f(n)=0)}~\asa \exists^{2}(f)=0  \big].
\ee
Clearly, \eqref{frlu} is the higher-order version of \emph{arithmetical comprehension} (see e.g.\ \cite[III]{simpson2}) stating that $\{ n\in \N : A(n)\}$ exists for arithmetical formulas $A$, which includes the underlined formula in \eqref{frlu}.   We point out \emph{Grilliot's trick}, a method for (effectively) obtaining $\exists^{2}$ from discontinuous functionals on e.g.\ $\R$ or $2^{\N}$, as also discussed in Section \ref{krilli}.
To our own surprise, this kind of effective result is essentially the prototype of \eqref{koch5}, as discussed in Section \ref{koli}.

\medskip

Now, as noted above, Kleene's $\exists^{2}$ can decide the truth of arithmetical formulas.  
In general, for a formula class $\Gamma$, one can study higher-order functionals that decide the truth of formulas $\gamma \in \Gamma$.  Examples are Kleene's quantifiers $\exists^{n}$ (\cite[Def.\ 5.4.3]{longmann}) and the Feferman-Sieg functionals $\nu_{n}$ from \cite[p.\ 129]{boekskeopendoen}, which we shall however not need.  

\medskip  
 
Secondly, we consider the \emph{neighbourhood function principle} $\NFP$ from \cite{troeleke1}, studied in \cite{KT, keuzet} under a different name.  

\bdefi[$\NFP$]\label{NFP}
For any formula $A(n^{0})$, we have
\be\label{durgopop}
(\forall f^{1})(\exists n^{0})A(\overline{f}n)\di (\exists \gamma\in K_{0})(\forall f^{1})A(\overline{f}\gamma(f)), 
\ee
where `$\gamma\in K_{0}$' means that $\gamma^{1}$ is a (total) Kleene associate.  
\edefi
Clearly, \eqref{durgopop} is a fragment of the Axiom of Choice involving continuous choice functions.   Not as obvious is that $\NFP$ is a `more constructive' formulation of the comprehension axiom (see Remark \ref{knfp} below).    
We also note that $\NFP$ involving third-order parameters has the form \eqref{koch}, namely for a formula $A(n^{0}, Y^{2})$ with all parameters shown, \eqref{durgopop} yields
\be\label{gazi} 
(\forall Y^{2})(\exists \gamma^{1})\big[   (\forall f^{1})(\exists n^{0})A(\overline{f}n, Y)\di  [\gamma\in K_{0}\wedge (\forall f^{1})A(\overline{f}\gamma(f))] \big].
\ee
Now, $\NFP$ proves the Lindel\"of lemma (\cite{troeleke1}), inspiring Theorem~\ref{kilo}.

\medskip

Finally, we can combine the above as follows: the case distinction from the beginning of this section distinguishes between whether a given theorem $\mathfrak{T}$ in the language of third-order arithmetic implies the existence of a discontinuous function (on $\R$ or $2^{\N}$), or not.   
It is then an empirical observation based on \cite{dagsam, dagsamII, dagsamIII, dagsamV, dagsamVI, dagsamVII, dagsamIX, dagsamX, dagsamXI, dagsamXII} that for this theorem $\mathfrak{T}$, \textbf{either} the theorem $\mathfrak{T}$ implies the existence of $\exists^{2}$ via the aforementioned Grilliot's trick, \textbf{or} $\mathfrak{T}$ is provable from a fragment of $\NFP$ where $A$ may include third-order parameters.  

\medskip

In the former case, the theorem $\mathfrak{T}$ can be analysed `indirectly' using $N$-reduction, namely via Grilliot's trick, as discussed in Section \ref{disc}.  In case the theorem $\mathfrak{T}$ is provable from a fragment $\NFP$ (with third-order parameters), we can generally bring $\mathfrak{T}$ in the form \eqref{koch} and hence analyse it \emph{directly} via $N$-reduction.  In light of \eqref{gazi}, fragments of $\NFP$ can always be analysed via $N$-reduction.  

\medskip

In conclusion, second-order comprehension has been generalised to higher types in two (more-or-less-known ways) ways, namely as follows.
\begin{itemize}
\item Formulate `characteristic functionals' like $\exists^{2}$ from \eqref{frlu} that decide the truth of certain formulas.  
\item Formulate $\NFP$ as in Definition \ref{NFP} for formulas involving higher-order parameters and variables.  
\end{itemize}
If a given theorem implies the existence of $\exists^{2}$, we can analyse it `indirectly' via $N$-reduction, namely via Grilliot's trick.
If a given theorem is provable from $\NFP$ involving third-order fragments, we can (readily) analyse it via $N$-reduction.  
In other words, {if} a third-order theorem is \textbf{consistent} with Brouwer's \emph{continuity theorem} that all functions on $\R$ are continuous (\cite{brouw}), {then} we can analyse it directly via $N$-reduction. 

\medskip

Finally, we show that $\NFP$ classically follows from comprehension and vice versa, assuming a fragment of the induction axiom. 
\begin{rem}[$\NFP$ and comprehension]\label{knfp}\rm
To obtain $\NFP$ from comprehension modulo coding of finite sequences, let $X$ be such that $\sigma\in X\leftrightarrow A(\sigma^{0^{*}})$ for any finite sequence $\sigma^{0^{*}}$ in $\N$.  Then define $\gamma(\sigma):= |\sigma|+1$ in case $\sigma\in X$, and $0$ otherwise.  Assuming the antecedent of \eqref{durgopop}, this yields a (total) Kleene associate.  By definition, $\gamma$ also satisfies the consequent of \eqref{durgopop}. 

\medskip

To obtain comprehension from $\NFP$, suppose towards a contradiction that comprehension is false, i.e.\ there is some formula $A(n)$ such that 
\be\label{tangk}
(\forall X\subset \N)(\exists n\in \N )\big[ [n\in X\wedge \neg A(n)]\vee [A(n)\wedge n\not \in X]    \big].
\ee
Now apply $\NFP$ to \eqref{tangk} (coding $X\subset \N$ as elements of $2^{\N}$) to obtain $\gamma\in K_{0}$.  The latter has an upper bound $k_{0}\in \N$ on $2^{\N}$, i.e.\ $n\in \N$ in \eqref{tangk} is bounded by $k_{0}$.  However, the induction axiom readily proves `finite comprehension' as follows:
\be\label{tangk2}
(\forall k\in \N)( \exists X\subset \N)(\forall n\leq k )\big[ n\in X\asa A(n) \big].
\ee
Hence, for $k=k_{0}+1$, \eqref{tangk2} yields a contradiction. 
\end{rem}

\subsection{The need for an extension of Turing computation}\label{crit}
We argue why the extension of Turing computation sketched in Section \ref{sketchy} is necessary and even most welcome, as follows.  
\begin{itemize}
\item Higher-order objects are `coded' as reals so as to accommodate their study via Turing machines.  
It has recently been established that this `coding practise' yields very different results compared to Kleene's approach, even for basic objects like \emph{functions of bounded variation} (Section \ref{plop})
\item The conceptual complexity of Kleene's extension of Turing computability is considerable, while the extension to `infinite time' Turing machines is too general for our purposes (Section~\ref{ohc}).  
\end{itemize}
Put another way, $N$-reduction is an attempt at formulating a relation `is computationally stronger than' for third-order statements that overcomes the above pitfalls, namely the conceptual complexity of Kleene's S1-S9 and the problems associated with second-order representations.

\subsubsection{Computing with second-order representations}\label{plop}
We show that there are huge differences between `computing with higher-order objects' and `computing with \emph{representations of higher-order objects}', even for 
basic objects like functions of bounded variation on $[0,1]$. 

\medskip

Now, various\footnote{Examples of such frameworks are: reverse mathematics (\cite{simpson2, stillebron}), constructive analysis (\cite[I.13]{beeson1}, \cite{bish1}), predicative analysis (\cite{littlefef}), and computable analysis (\cite{wierook}).  Bishop's constructive analysis is not based on Turing computability \emph{directly}, but one of its `intended models' is (constructive) recursive mathematics, as discussed in \cite{brich}.  One aim of Feferman's predicative analysis is to capture  Bishop's approach.} research programs have been proposed in which higher-order objects are \emph{represented/coded} as real numbers or similar representations, so as to make 
them amenable to the Turing framework.  It is then a natural question whether there is any significant difference\footnote{The \emph{fan functional} constitutes an early \emph{natural} example of this difference: it has a computable code but is not S1-S9 computable (but S1-S9 computable in Kleene's $\exists^{2}$ from Section \ref{more}).  The fan functional computes a modulus of uniform continuity for continuous functions on Cantor space; details may be found in \cite{longmann}.\label{seeyouwell}} between the Kleene S1-S9 approach or the Turing-approach-via-codes.     

\medskip

Continuous functions being well-studied$^{\ref{seeyouwell}}$ in this context, Dag Normann and the author have investigated \emph{functions of bounded variation}, which have \textbf{at most} countably many points of discontinuity (\cite{dagsamXII}).  A central result is the \emph{Jordan decomposition theorem} which implies that $f:[0,1]\di \R$ of bounded variation on $[0,1]$ satisfies $f=g-h$ on $[0,1]$ for monotone $g, h:[0,1]\di \R$.  We have the following results. 
\begin{itemize}
\item In case $f:[0,1]\di \R$ of bounded variation is given via a \emph{second-order} representation, then the monotone $g, h:[0,1]\di \R$ such that $f=g-h$, 
can be computed from finite iterations of the Turing jump with $f$ as a parameter by \cite[Cor. 10]{kreupel}. 
\item A \emph{Jordan realiser} $\J$ takes as input $f:[0,1]\di \R$ of bounded variation and outputs $\J(f)=(g,h)$, i.e.\ monotone $g, h:[0,1]\di \R$ with $f=g-h$ on $[0,1]$.  
No Jordan realiser is computable (S1-S9) in any type 2 functional by \cite[Theorem~3.9]{dagsamXI}.
\end{itemize}
Regarding the second item, a Jordan realiser is therefore not computable from (finite iterations of) $\exists^{2}$, the higher-order counterpart of the Turing jump.  
The same holds for $\SS_{k}^{2}$, which is a type two functional that can decide $\Pi_{k}^{1}$-formulas (involving first- and second-order parameters).
The usual proof of the Jordan decomposition theorem implies that Kleene's $\exists^{3}$ computes a Jordan realiser.  But $\exists^{3}$ implies full second-order arithmetic, and the same holds for the combination of all $\SS_{k}^{2}$.  

\medskip

In conclusion, there is a \textbf{huge} difference in the computational hardness of the Jordan decomposition theorem depending on whether we use representations or not.  
However, this theorem deals with functions of bounded variation, a class `very close' to the class of continuous functions. 
Hence, (Turing) computing with representations, interesting as it may be, is completely different from (Kleene) computing with actual higher-order objects.  
In this light, there is a clear need for a notion like $N$-reduction that allows us to compute with actual higher-order objects while staying close to Turing computability. 

\subsubsection{On higher-order computation}\label{ohc}
We argue that the conceptual complexity of Kleene's S1-S9 is considerable, while the extension to `infinite time' Turing machines is too general for our purposes (Section \ref{ohc}).  

\medskip

First of all, as noted above, Turing's famous `machine' model constitutes the first intuitively convincing framework for \emph{computing with real numbers} (\cite{tur37}) while Kleene's S1-S9 extend Turing's approach to \emph{computing with objects of any finite type} (\cite{kleeneS1S9, longmann}).  

\medskip

We have studied or made extensive use of Kleene's S1-S9 computability theory in \cite{dagsam, dagsamII, dagsamIII, dagsamV, dagsamVI, dagsamVII, dagsamIX, dagsamX, dagsamXI, dagsamXII}.
In our opinion, while vastly more general in scope, Kleene's S1-S9 has the following conceptual drawbacks.
\begin{itemize}
\item Turing computability boasts the elementary `Kleene $T$-predicate' (see e.g.\ \cite[p.\ 15]{zweer}) where $T(e, x, y)$ intuitively expresses that $y$ codes the computation steps of the $e$-th Turing machine program with input $x$.  There is no such construct for S1-S9.  
\item Kleene's \emph{recursion theorem} is one of the most elegant and important results in Turing computability (\cite[p.\ 36]{zweer}) and is derived from first principles.  
By contrast, Kleene's schemes S1-S8 formalise higher-order primitive recursion (only), while S9 essentially hard-codes the recursion theorem for S1-S9. 
\item Natural space and time constraints can be formulated for Turing machines, yielding a canonical complexity theory (\cite{aurora}); to the best of knowledge, no such canonical theory exists for higher-order computation in general or S1-S9 in particular.
\item Even basic questions concerning S1-S9 computability theory can be challenging.  We have formulated a most basic example in Section \ref{flonk} concerning the \emph{uncountability of $\R$}, arguably one of the 
most basic properties of the real numbers, which nonetheless yields very hard problems regarding S1-S9 computability.  
\end{itemize}
In conclusion, the previous items suggest that the much greater scope of S1-S9 comes at the cost of conceptual clarity and causes technical difficulties.
It is then a natural question whether we can find a `sweet spot' between the conceptual clarity of Turing computability on one hand, and the generality of S1-S9, leading us to $N$-reduction.  

\medskip

Secondly, an \emph{infinite time Turing machine} (ITTM) (\cite{hamkins2}) is a generalisation of Turing computability involving infinite time or space.   
Welsh provides an overview in \cite{welshman} and Dag Normann studies non-montone inductive definitions and the connection to ITTMs in \cite{dagnonmon}.  

\medskip

In particular, Normann shows that ITTMs can outright compute many of the functionals introduced in \cite{dagsam, dagsamIII, dagsamV}, including realisers for the covering lemmas due to Vitali, Heine-Borel, and Lindel\"of.
However, all these functionals are not S1-S9 computable in any type two functional, i.e.\ the former are `hard to compute' (see \cite{dagsam, dagsamIII, dagsamV}).  
As a result, ITTMs yield `too strong' a baseline framework for our purposes. 

\section{Some results}\label{main}
We establish some results based on our freshly minted notion of $N$-reduction from Section \ref{sketchy}, namely concerning the following topics.
\begin{itemize}
\item Convergence theorems for nets (Sections \ref{conetx} and \ref{cov}).
\item Covering theorems (Sections \ref{cov}).
\item The uncountability of $\R$ (Section \ref{flonk}).
\item Discontinuous functions on $\R$ and Grilliot's trick (Section \ref{disc}).  
\end{itemize}
The below just constitutes an illustrative first collection of examples: we do not claim our results to be particularly deep or ground-breaking.  
We do point out that the above items yield functionals that are, like the Jordan realisers from Section \ref{plop}, hard to compute in that no type 2 functional can (S1-S9) compute them, while $\exists^{3}$ can.  

\medskip

Finally, the curious reader of course wonders what the counterpart of the Turing jump is for $N$-reduction.  We believe this to be the `$J$' operation discussed in Section \ref{conetx}. 

\subsection{Nets and computability theory}\label{netz}
We study basic properties of \emph{nets} via $N$-reduction.  Nets are a generalisation of sequences, and the latter hark back to the early days of computability theory (\cite{specker}).
Filters provide an alternative to nets, but will not be discussed here for reasons discussed in Remark \ref{kutfilter}. 
\subsubsection{Nets, a very short introduction}
Nets are the generalisation of the concept of \emph{sequence} to possibly uncountable index sets, nowadays called \emph{nets} or \emph{Moore-Smith sequences}.  
These were first described in \cite{moorelimit2} and then formally introduced by Moore and Smith in \cite{moorsmidje} and by Vietoris in \cite{kliet}.
These authors also established the generalisation to nets of various basic theorems due to Bolzano-Weierstrass, Dini, and Arzel\`a (\cite[\S8-9]{moorsmidje} and \cite[\S4]{kliet}).

\medskip

One well-know application is the formulation of fundamental topological notions like compactness in terms of nets, as pioneered in \cite{berkhof}, while Kelley's textbook \cite{ooskelly} is standard. 
Tukey's monograph \cite{tukey1} builds a similar framework, based on very specific nets, called \emph{phalanxes}, where the index sets consist of finite subsets ordered by inclusion. 
We now list some basic definitions. 
\bdefi\label{nets}
A set $D\ne \emptyset$ with a binary relation `$\preceq$' is \emph{directed} if
\begin{enumerate}
 \renewcommand{\theenumi}{\alph{enumi}}
\item $\preceq$ is transitive, i.e.\ $(\forall x, y, z\in D)([x\preceq y\wedge y\preceq z] \di x\preceq z )$,
\item for $x, y \in D$, there is $z\in D$ such that $x\preceq z\wedge y\preceq z$, \label{bulk}
\item $\preceq$ is reflexive, i.e.\ $(\forall x\in D)(x \preceq x)$.  
\end{enumerate}
For a directed set $(D, \preceq)$ and a topological space $X$, any mapping $x:D\di X$ is a \emph{net} in $X$.  
We denote $\lambda d. x(d)$ as `$(x_{d})_{d\in D}$' or `$x_{d}:D\di X$' to suggest the connection to sequences.  
The directed set $(D, \preceq)$ is not always explicitly mentioned together with a net $x_{d}:D\di X$.
\edefi 
The following definitions readily generalise from the sequence notion. 
\bdefi[Convergence of nets]\label{convnet}
If $x_{d}:D\di X$ is a net, we say that it \emph{converges} to the limit $\lim_{d} x_{d}=y\in X$ if for every neighbourhood $U$ of $y$, there is $d_{0}\in D$ such that for all $e\succeq d_{0}$, $x_{e}\in U$. 
\edefi
\bdefi[Increasing nets]\label{inc}
A net $x_{d}:D\di \R$ is \emph{increasing} if $a\preceq b$ implies $x_{a}\leq_{\R} x_{b} $ for all $a,b\in D$.
\edefi
Now, we shall mostly use nets where the index set consists of finite sets of real numbers ordered by inclusion, i.e.\ Tukey's `phalanxes' from \cite{tukey1}.    
As noted in Remark \ref{fooker}, real numbers can readily be represented via elements of Baire space using primitive recursive operations.  
Thus, such phalanxes are essentially nets indexed by $\N^{\N}$.  The notion of `sub-sequence' of course generalises to `sub-net' (see e.g.\ \cite{samnetspilot}), but we do not need this (slightly technical) notion here. 

\medskip

Finally, we discuss an alternative to nets and why it is not suitable here.
\begin{rem}[Nets and filters]\label{kutfilter}\rm
For completeness, we discuss the intimate connection between \emph{filters} and nets.  
Now, a topological space $X$ is compact if and only if every \emph{filter base} has a \emph{refinement} that converges to some point of $X$, which follows by \cite[Prop.\ 3.4]{zonderfilter}.  

\medskip

Whatever the meaning of the previous italicised notions, the similarity to the Bolzano-Weierstrass theorem for nets is obvious, and not a coincidence: for every net $\mathfrak{r}$, there is an associated filter base $\mathfrak{B(r)}$ such that if the erstwhile converges, so does the latter to the same point; 
one similarly associates a net $\mathfrak{r(B)}$ to a given filter base $\mathfrak{B}$ with the same convergence properties (see \cite[\S2]{zonderfilter}).  

\medskip

Hence, filters provide an alternative to nets, but we have chosen to work with nets for the following reasons, where the second one is the most pressing.
\begin{itemize}
\item Nets have a greater intuitive clarity compared to filters, in our opinion, due to the similarity between nets and sequences. 
\item Nets are `more economical' in terms of ontology: consider the aforementioned filter base $\mathfrak{B(r)}$ associated to the net $\mathfrak{r}$.  
By \cite[Prop.\ 2.1]{zonderfilter}, the base has strictly higher type than the net.  The same holds for $\mathfrak{r(B)}$ versus $\mathfrak{B}$.  
\item The notion of \emph{refinement} mirrors the notion of sub-net (\cite[\S2]{zonderfilter}).  The former is studied in \cite{sahotop} in the context of paracompactness; the associated results suggest that the notion of sub-net works better in weak systems. 
\end{itemize}
On a conceptual note, the well-known notion of \emph{ultrafilter} corresponds to the equivalent notion of \emph{universal net} (\cite[\S3]{zonderfilter}).  
On a historical note, Vietoris introduces the notion of \emph{oriented set} in \cite[p.~184]{kliet}, which is exactly the notion of `directed set'.  He proceeds to prove (among others)
a version of the Bolzano-Weierstrass theorem for nets.  Vietoris also explains that these results are part of his dissertation, written in the period 1913-1919, i.e.\ during his army service for the Great War.
\end{rem}

\subsubsection{Nets and convergence}\label{conetx}
We obtain a first result concerning $N$-reduction and convergence theorems for nets.
In particular, as promised above, we connect the latter to the following operation, which is central and seems to play the role of the Turing jump:  for given $Y^{2}$, define 
\[
J(Y):=\{ n\in \N: (\exists f^{1})(Y(f, n)=0)   \}.
\]
We now have Theorem \ref{ploiu} where $C$ is Cantor space ordered via the lexicographic ordering $\leq_{\lex}$, i.e.\ the notion of `increasing net in $C$' is obvious following Definition \ref{inc}. 
We note that subsets of $\N^{\N}$ or $\R$ are given by characteristic functions, well-known from measure and probability theory and going back one hundred plus of years (\cite{didi3}). 
\begin{thm}\label{ploiu}
The following strongly $N$-reduce to one and other:
\begin{itemize}
\item for all $Y^{2}$, there is $X\subset \N$ such that $X=J(Y)$,
\item a monotone net in $C$ indexed by Baire space, has a limit. 
\end{itemize}
\end{thm}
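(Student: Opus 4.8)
The plan is to establish both directions as \emph{strong} $N$-reductions, throughout coding a finite set of reals by a single element of $\N^{\N}$ (so that the phalanxes from Section~\ref{netz} are literally nets indexed by Baire space) and equipping $C$ with the order topology induced by $\leq_{\lex}$. Since $(C,\leq_{\lex})$ is a compact linearly ordered space, every increasing net converges and its limit is its $\leq_{\lex}$-supremum; this is the only convergence fact I will use. I write $\nu(f)\in C$ for the value of a net at index $f^{1}$ and $\nu(f)(n)$ for its $n$-th bit.

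For the direction \textbf{``the $J$-problem strongly $N$-reduces to net convergence''} I would start from $Y^{2}$ and define a net $\nu_{Y}$ by letting $\nu_{Y}(s)(n)=1$ iff some real in the finite set coded by $s$ is a witness $f$ with $Y(f,n)=0$. A term $t$ of G\"odel's $T$ produces $\nu_{Y}$ from $Y$ by a bounded search over $s$, and $\nu_{Y}$ is \emph{pointwise} increasing under inclusion, hence $\leq_{\lex}$-increasing. Its pointwise supremum is exactly $\chi_{J(Y)}$, and one checks this is also the $\leq_{\lex}$-supremum, so $\lim_{s}\nu_{Y}(s)=\chi_{J(Y)}$. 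Given this limit as an oracle, a trivial machine reads off $J(Y)=\{n:\chi_{J(Y)}(n)=1\}$; since the oracle is the net limit alone, the reduction is strong.

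For the converse direction I would start from a $\leq_{\lex}$-increasing net $\nu$ (the instance $Z$) and form the functional $Y_{\nu}=t(Z)$ with $Y_{\nu}(f,\ulcorner\sigma\urcorner)=0$ iff $\nu(f)$ extends the finite binary string $\sigma$; this is again a term of G\"odel's $T$ in $\nu$, being a bounded quantifier over $|\sigma|$. Then $J(Y_{\nu})=\{\ulcorner\sigma\urcorner:(\exists f^{1})(\nu(f)\textup{ extends }\sigma)\}$ codes the tree $T_{\nu}$ of initial segments realised along the net. The key lemma will be that the limit $y$ of $\nu$ equals the \emph{rightmost branch} of $T_{\nu}$: by induction on $n$, $y(n)=1$ iff $(y\restriction n){}^{\frown}1\in T_{\nu}$. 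A Turing machine with oracle $J(Y_{\nu})$ then computes $y$ greedily, one bit per oracle query and with no further access to $\nu$, yielding a strong $N$-reduction.

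The hard part will be the lexicographic analysis underlying this converse direction. Because a $\leq_{\lex}$-increasing net need not be pointwise increasing, I cannot argue coordinatewise; the crux is a squeeze lemma stating that once some $\nu_{d}$ agrees with the limit $y$ on an initial segment, every later $\nu_{e}$ does too (from $\nu_{d}\leq_{\lex}\nu_{e}\leq_{\lex}y$ together with the agreement of $\nu_{d}$ and $y$). Combined with directedness, this pins down each bit of $y$ as a single existential query over the index set $\N^{\N}$, which is precisely the information that $J$ delivers, and it also shows the greedy recursion never reaches a dead end. The forward direction, by contrast, is routine, since there the constructed net is pointwise monotone so that the order limit coincides with the pointwise limit $\chi_{J(Y)}$.
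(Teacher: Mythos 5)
Your proof is correct and matches the paper's essentially step for step: the direction from $J$ to net convergence uses the same phalanx of finite witness-sets whose pointwise supremum is $\chi_{J(Y)}$, and the converse direction is the same greedy bit-by-bit extraction of the limit from $J$ applied to a term-definable functional, with the same strongness observations (only the limit, respectively only the set $J(Y)$, is used as oracle). The single cosmetic difference is that the paper's queries are lex-tails, i.e.\ $(\exists d)(f_{d}\geq_{\lex}\sigma*00\dots)$, whereas you query realised prefixes via the tree $T_{\nu}$ and its rightmost branch; given the invariant that the string computed so far is a prefix of the supremum, the two kinds of queries are equivalent, so this is the same argument in a different dressing.
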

\begin{proof}
To show that the second item strongly $N$-reduces to the first one, let $f_{d}:D\di C$ be an increasing net in $C$ indexed by Baire space and consider the formula $(\exists d\in D)(f_{d}\geq_{\lex} \sigma*00\dots)$, where $\sigma^{0^{*}}$ is a finite binary sequence. 
The latter formula is equivalent to a formula of the form $(\exists g^{1})(Y(g, n)=0)$ where $Y$ has the form $t(\lambda d. f_{d}, n)$ for a term $t$ of G\"odel's $T$.  
Now use $J(Y)$ to define the limit $f=\lim_{d}f_{d}$, as follows: $f(0)$ is $1$ if  $(\exists d\in D)(f_{d}\geq_{\lex} 100\dots)$ and zero otherwise.  
One then defines $f(n+1)$ in terms of $\overline{f}n$ in the same way.  Note that we only used $J(Y)$ to define $f$, i.e.\ we have a strong $N$-reduction.  

\medskip

For the remaining case,  fix some $Y^{2}$ and let $w^{1^{*}}$ be a sequence of elements in $\N^{\N}$. 
Define $f_{w}:D\di C $ as $f_{w}:=\lambda k.F(w, k)$ where $F(w, k)$ is $1$ if $(\exists i<|w|)(Y(w(i), k)=0)$, and zero otherwise. 
Then $\lambda w^{1^{*}}.f_{w}$ is a monotone net (phalanx) in $C$ indexed by Baire space (modulo coding).   
In case $\lim_{w}f_{w}=f$, then it is readily verified that:
\be\label{nogisnekeer}
(\forall n^{0})\big[(\exists g^{1})(Y(g,n)=0)\asa f(n)=1\big].
\ee
In the notation of \eqref{koch5}, the net $\lambda w^{1^{*}}f_{w}$ has the form $t(Y)(w)$ while $s$ does not depend on $Y$, i.e.\ we have a strong $N$-reduction.  
\qed
\end{proof}
The reader is warned that not all $N$-reduction results are as elegant.

\subsubsection{Nets and compactness}\label{cov}
We connect the Heine-Borel theorem and convergence theorems for nets via $N$-reduction.

\medskip

First of all, the Heine-Borel theorem, aka \emph{Cousin's lemma}, (\cite{cousin1, opborrelen2}) pertains to open-cover compactness, which we study for the unit interval.
Clearly, each $\Psi:[0,1]\di \R^{+}$ yields a `canonical' covering $\cup_{x\in [0,1]}B(x, \Psi(x))$, which must have a finite sub-covering.  This yields the principle $\HBU$, which has the form \eqref{koch}.
\be\tag{$\HBU$}
(\forall \Psi:[0,1]\di \R^{+})(\exists x_{0}, \dots, x_{k}\in [0,1] )\big( [0,1]\subset \cup_{i\leq k}B(x_{i}, \Psi(x_{i})) \big).
\ee
The reals in $\HBU$ are hard to compute (S1-S9) in terms of $\Psi$, as shown in \cite{dagsam, dagsamII}, as no type two functional can perform this task. 
Computing a \emph{Lebesgue number}\footnote{The notion of \emph{Lebesgue number} is familiar from topology (see e.g.\ \cite[p.\ 175]{munkies}) and amounts to the following: for a metric space $(X,d)$ and an open covering $O$ of $X$, the real number $\delta >0$ is a Lebesgue number for $O$ if every subset $Y$ of $X$ with $\textsf{diam}(Y):=\sup_{x, y\in Y}d(x, y)<\delta$ is contained in some member of the covering.} is similarly hard as shown in \cite{dagsamV}.  
Nonetheless, $\HBU$ seems stronger than the \emph{Lebesgue number lemma} expressing that a Lebesgue number exists for any $\Psi:[0,1]\di \R^{+}$.  
We believe that Theorem \ref{fabuki} expresses this fundamental difference. 
\begin{thm}\label{fabuki}~
\begin{itemize}
\item $\HBU$ $\mu N$-reduces to: for a monotone convergent net in $[0,1]$ indexed by Baire space, there is a modulus\footnote{A modulus of convergence for a net $x_{d}:D\di \R$ with $\lim_{d}x_{d}=x$ is a sequence $(d_{k})_{k\in \N}$ with $(\forall k\in \N)(\forall d\succeq d_{n})(|x_{d}-x|<\frac{1}{2^{k}} )$.}  of convergence. 
\item The \emph{Lebesgue number lemma} strongly $\mu N$-reduces to: a monotone net in $[0,1]$ indexed by Baire space, has a limit. 
\end{itemize}
\end{thm}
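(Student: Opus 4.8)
The plan is to treat both items as instances of the template \eqref{koch5}. From the given $\Psi:[0,1]\di\R^{+}$, viewed as a type-$2$ input $Z$ and with $\mu^{2}$ on hand (so that $\exists^{2}$, and hence every arithmetical fact about the finitely many reals involved, is decidable), I would build by a term $t(Z,\mu)$ of G\"odel's $T$ a monotone phalanx in $[0,1]$ indexed by finite tuples $w$ of reals ordered by inclusion, and then let a Turing machine convert the realiser's output (a modulus, resp.\ a limit) into the required finite subcovering, resp.\ Lebesgue number. Note that the subcover in $\HBU$ is witnessed by the \emph{centres} $x_{0},\dots,x_{k}$, the radii being determined by $\Psi$.

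For the first item I would take the two-valued net $\chi_{w}$ with $\chi_{w}=1$ exactly when the finite family $\{\,B(x,\Psi(x)):x\in w\,\}$ covers $[0,1]$, and $\chi_{w}=0$ otherwise. Adding points to $w$ can only enlarge the union, so $\chi_{w}$ is monotone; deciding whether finitely many open balls cover $[0,1]$ reduces to inspecting the finitely many endpoints and is performed inside $t(Z,\mu)$ using $\mu$ for the real comparisons. By $\HBU$ (true in the classical ambient theory) some $w^{*}$ covers, so the net converges to $1$. If $x$ is any modulus of convergence, its level-$1$ entry $w_{1}$ satisfies $\chi_{w'}=1$ for all $w'\succeq w_{1}$; by reflexivity $\chi_{w_{1}}=1$, so the reals listed in $w_{1}$ already form a finite subcovering. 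The machine reads $w_{1}$ off $x$ and, consulting the oracle for $\Psi$ to present the balls $B(x_{i},\Psi(x_{i}))$, outputs it; this use of $Z$ is why the reduction is $\mu N$ rather than strong. If instead the net failed to converge, no modulus would exist and the implication in \eqref{koch5} would hold vacuously, so $\HBU$ need not be invoked inside the reduction itself.

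For the second item the point is to realise a Lebesgue number as the limit of a monotone net, bypassing the unavailability of a finite subcover. I would set $v_{w}:=\min_{y\in[0,1]}\max_{x\in w}(\Psi(x)-|y-x|)$, clamped into $[0,1]$ by $\min(1,\max(0,\cdot))$. For $w\subseteq w'$ the inner maximum only grows pointwise in $y$, so $v_{w}\le v_{w'}$ and the net is monotone increasing; the inner function is a finite maximum of tents, so its minimum over $[0,1]$ is attained at one of finitely many breakpoints and is computed by $t(Z,\mu)$. Writing $\rho(y):=\sup_{x\in[0,1]}(\Psi(x)-|y-x|)\ge\Psi(y)>0$, the limit equals $\sup_{w}v_{w}\le\inf_{y}\rho(y)$, and a compactness argument shows it is at least the classical Lebesgue number, hence positive. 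Thus if $x$ is the limit then $0<x\le\inf_{y}\rho(y)$, so $x/2$ is a strict Lebesgue number; the machine merely halves the real $x$, needing neither $Z$ nor even an oracle, which is exactly why this reduction is strong.

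The routine parts (monotonicity, directedness, phalanx-indexing by Baire space) are immediate; the two delicate points I expect to argue with care are (i) that $v_{w}$ and $\chi_{w}$ are genuinely terms of G\"odel's $T$ in $Z$ and $\mu$, i.e.\ that minimising a finite maximum of tents over $[0,1]$ and deciding coverage of $[0,1]$ by finitely many open balls are primitive recursive once the candidate breakpoints are located, and (ii), the real crux for the second item, that $\sup_{w}v_{w}>0$. The latter amounts to a minimax inequality $\sup_{w}\inf_{y}\le\inf_{y}\sup_{x}$ whose reverse bound holds only because the deep cores $\{\,y:\Psi(x)-|y-x|>c\,\}$ form an open cover of $[0,1]$ admitting a finite subcover; making this quantitative, so that some finite $w$ forces $v_{w}\ge c$ for every $c$ below the Lebesgue number, is where the compactness of $[0,1]$ genuinely enters and is the step on which I would spend the most care.
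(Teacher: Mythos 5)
Your proposal is correct in substance and follows the paper's general template (a monotone phalanx indexed by finite sequences of reals, built from $\Psi$ by a term involving $\mu^{2}$, whose modulus resp.\ limit is post-processed into the desired object), but both constructions differ from the paper's in ways worth recording. For the first item, the paper's net tests coverage of $\Q\cap[0,1]$ only and then repairs the possibly missed (irrational) points by adjoining $w(i)\pm\Psi(w(i))$ in the post-processing; this is exactly why the paper's term $s$ needs access to $\Psi$. Your net instead tests genuine coverage of $[0,1]$, which is indeed decidable from the finitely many endpoint reals using $\mu^{2}$ (the least uncovered point, if any, is $0$ or of the form $w(i)+\Psi(w(i))$), so your post-processing is a mere projection: read off $w_{1}$. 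Note this actually yields a \emph{strong} $\mu N$-reduction; your stated reason for non-strength (consulting $\Psi$ to ``present the balls'') is unnecessary, since the $\HBU$-witnesses are just the centres, and in any case that issue concerns strength, not $\mu N$ versus $N$. This is more than the theorem claims and is in mild tension with the paper's remark, after its proof, that the $Z$-dependence of $s$ ``seems necessary''. For the second item the routes genuinely diverge: the paper encodes the least $N$ such that $2^{-N}$ is a Lebesgue number of an explicit finite subcover into net values $\frac{3}{4}+2^{-(N+3)}$, recovering $N$ from the limit by unbounded search, whereas your net converges to the optimal depth $\min(1,\inf_{y}\rho(y))$, with $\rho(y)=\sup_{x\in[0,1]}(\Psi(x)-|y-x|)$, and you simply halve the limit; this avoids both the discrete encoding and the search.

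There is, however, one quantitative slip precisely in the step you flagged as the crux: it is \emph{not} true that every $c$ below the classical Lebesgue number is forced, i.e.\ admits some $w$ with $v_{w}\geq c$. Take $\Psi\equiv\varepsilon$: then $2\varepsilon$ is a Lebesgue number (any set of diameter $<2\varepsilon$ lies in the $\varepsilon$-ball around its midpoint), yet $\rho\equiv\varepsilon$, so the deep cores $\{y:\Psi(x)-|y-x|>c\}$ with threshold $c\in(\varepsilon,2\varepsilon)$ are empty and cover nothing; hence $\sup_{w}v_{w}=\varepsilon$, strictly below that Lebesgue number. The correct threshold is $\inf_{y}\rho(y)$ itself: for every $c<\inf_{y}\rho(y)$ the cores do cover $[0,1]$, a finite subcover gives $v_{w}\geq c$, whence $\sup_{w}v_{w}=\min(1,\inf_{y}\rho(y))$. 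Positivity, which is all your halving step needs, then requires a separate (easy) argument: $\rho$ is a supremum of $1$-Lipschitz functions of $y$, hence $1$-Lipschitz (or at least lower semicontinuous) and everywhere positive on the compact set $[0,1]$, so $\inf_{y}\rho(y)>0$. With that replacement your verification goes through: the limit $x^{*}$ satisfies $0<x^{*}\leq\inf_{y}\rho(y)$, any $\delta\in(0,\inf_{y}\rho(y))$ is a Lebesgue number, and so $x^{*}/2$ works, giving the strong $\mu N$-reduction exactly as you say.
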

\begin{proof}
For the first part, fix $\Psi:[0,1]\di \R^{+}$ and define the following where $w^{1^{*}}$ is a finite sequence of reals:
\[
x_{w}:=
\begin{cases}
1 & (\forall q\in \Q\cap [0,1])( q\in \cup_{i<|w|}B(w(i), \Psi(w(i))) )\\
B(w)/2  & \textup{ otherwise}
\end{cases}.
\]
Here, $B(w)$ is the left-most end-point in $[0,1]$ of the intervals of the form $B(w(i), \Psi(B(w(i))))$ for $i\leq k$ that is not covered by the union.  
Note that $B(w)$ and $x_{w}$ are readily defined using $\mu^{2}$.   Modulo coding of reals, $\lambda w^{1^{*}}.x_{w}$ can be viewed as a monotone net (phalanx) indexed by Baire space and we must have $\lim_{w}x_{w}=1$.  
If $(w_{k})_{k\in \N}$ is a modulus of convergence, then $|x_{w_{2}}-1|<\frac{1}{4}$ by definition, implying $x_{w_{2}}=1$.  
Hence, $\cup_{i<|w_{2}|}B(w(i), \Psi(w(i)))$ covers $[0,1]\cap \Q$.  Now adjoin to $w_{2}$ all the points $w_{2}(i)\pm\Psi(w_{2}(i))$ for $i<|w_{2}|$, to obtain a covering of $[0,1]$. 
This `adjoining' takes the form of $s(\Psi, w_{2})$ while $x_{w}$ takes the form $t(\Psi, \mu^{2})(w)$ for terms $s, t$ of G\"odel's $T$, using the notation from \eqref{koch5}.

\medskip

For the second part, replace the output $1$ by $\frac{3}{4}+\frac{1}{2^{N+3}}$ in the first case of $x_{w}$, where $N$ is as follows: adjoin to $w$ all the points $w(i)\pm\Psi(w(i))$ for $i<|w|$, to obtain a covering of $[0,1]$.  Now use $\mu^{2}$ to find $N\in \N$ such that $\frac{1}{2^{N}}$ is a Lebesgue number for the latter covering.  
Note that the modified net is still monotone as extending $w$ can only increase the associated Lebesgue number. 
Clearly, any cluster point of the modfied net is found in $(\frac{3}{4}, 1)$.  
A straightforward unbounded search can now recover a Lebesgue number from the cluster point of the net \emph{without} access to $\Psi$, i.e.\ we have a strong $\mu N$-reduction. 
\qed
\end{proof}
In light of the first part of the previous proof, the `post-processing' term $s$ in \eqref{koch5} seems necessary as a Turing machine cannot evaluate a third-order functional at a given point due to type restrictions.  

\medskip

As shown in \cite{samnetspilot}, the existence of a modulus of convergence as in the first item of the theorem requires a fragment of the Axiom of Choice ($\AC$) beyond $\ZF$.  
In fact, one readily shows that the former existence statement $N$-reduces (and vice versa) to the following fragment of $\AC$:
\[
(\forall Y^{2})\big[ (\forall n^{0})(\exists f^{1})(Y(f, n)=0)\di (\exists Z^{0\di 1})(\forall n^{0})(Y(Z(n), n)=0) \big], 
\]
where we exclude the trivial case $(\exists f^{1})(\forall n^{0})(Y(f, n)=0)$. 

\medskip

Finally, we connect the Lebesgue number lemma and $\NFP$ as follows.
\begin{thm}\label{kilo}
The \emph{Lebesgue number lemma} strongly $N$-reduces to $\NFP$ for $A(n)\equiv (\exists f^{1})(Y(f, n)=0)$ for any $Y^{2}$.  
\end{thm}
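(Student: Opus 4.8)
The plan is to exhibit a strong $N$-reduction from the Lebesgue number lemma to the instance of $\NFP$ for the formula $A(n)\equiv(\exists f^{1})(Y(f,n)=0)$, following the template \eqref{koch5}. Recall that by \eqref{gazi}, this instance of $\NFP$ has the form $(\forall Y^{2})(\exists \gamma^{1})[\,\cdots]$, so in the notation of \eqref{koch5} it plays the role of the antecedent problem $(\forall Y^{2})(\exists x^{1})A(Y,x)$ with $x=\gamma$. First I would fix a functional $\Psi:[0,1]\di\R^{+}$ giving a canonical covering of $[0,1]$, and from it build a type-two functional $Y^{2}=t(\Psi)$ by a term $t$ of G\"odel's $T$, so that the predicate $(\exists f^{1})(Y(f,n)=0)$ codes the statement that the rational point (or basic dyadic interval) indexed by $n$ is contained in some member of the $\Psi$-covering. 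Since each such rational/interval is indeed covered, the antecedent $(\forall f^{1})(\exists n^{0})A(\overline{f}n)$ of the $\NFP$-instance will hold vacuously or after a trivial reformulation, and $\NFP$ then supplies a Kleene associate $\gamma\in K_{0}$.

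The key step is then to recover an explicit Lebesgue number from $\gamma$ by a Turing machine with oracle $\gamma$ alone, i.e. \emph{without} further access to $\Psi$, which is what makes the reduction \textbf{strong}. The mechanism is that $\gamma$, being a Kleene associate, has a uniform modulus: by the first defining clause of a neighbourhood function, $(\forall \beta^{1})(\exists n^{0})(\gamma(\overline{\beta}n)>0)$, and on the compact space $2^{\N}$ (or the relevant phalanx index set) this yields a finite bound $k_{0}$ on how deep one must read any branch before $\gamma$ commits. This bound $k_{0}$ translates directly into a uniform modulus for the covering, from which $\frac{1}{2^{k_{0}}}$ (or a similar dyadic quantity) is a Lebesgue number. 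Concretely, the Turing machine $\{e\}^{\gamma}$ searches $\gamma$ for the least such uniform bound and outputs the corresponding $\delta>0$; since this search queries only $\gamma$, the term $s$ of \eqref{koch5} depends only on the associate and not on $\Psi$, giving a strong $N$-reduction. The extraction of the finite bound $k_{0}$ from the associate is the analogue of the compactness/fan-theorem argument already used implicitly in Theorem \ref{fabuki} and in the $\NFP$-proof of the Lindel\"of lemma cited just before Theorem \ref{kilo}.

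The main obstacle I anticipate is precisely this extraction of a \emph{uniform} modulus from $\gamma$ by pure Turing computation: a priori the first clause of Definition \ref{hunki} only guarantees, for each individual branch $\beta$, \emph{some} depth at which $\gamma$ becomes positive, and passing to a uniform bound over all branches is a compactness argument that is not automatically Turing-effective. The resolution should exploit that the index set is effectively a compact space (Cantor space / dyadic phalanxes as in Remark \ref{fooker}), so that an unbounded search over finite binary sequences $\sigma$ checking $\gamma(\sigma)>0$ terminates and certifies a uniform bound; this is the same phenomenon that makes the fan functional computable in $\exists^{2}$, except that here the associate $\gamma$ itself already encodes the continuity data, so no separate $\exists^{2}$ is needed and the search is a genuine oracle computation in $\gamma$. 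I would therefore structure the proof so that the definition of $Y=t(\Psi)$ arranges the branches to range over (codes for) points of $[0,1]$, making the uniform bound on $\gamma$ coincide with a common covering radius, and then verify that the resulting $\delta$ satisfies the Lebesgue-number property by the standard diameter argument recalled in the footnote to $\HBU$.
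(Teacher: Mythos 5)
Your proposal is correct and follows essentially the same route as the paper: the same $\NFP$-instance (a trivially true statement that some dyadic neighbourhood of each coded point lies inside one covering ball), followed by extraction of a uniform bound from the associate $\gamma$ over Cantor space using only $\gamma$ as oracle, which is what makes the reduction strong. Your explicit unbounded search over finite binary sequences is precisely the computation the paper packages as ``apply the Kleene associate for the fan functional to $\gamma$'', so the two arguments coincide in substance.
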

\begin{proof}
By Remark \ref{fooker}, quantifying over $2^{\N}$ or $[0,1]$ amounts to nothing more than quantifying over Baire space.   
To see this, define $\bb:\N^{\N}\di 2^{\N}$ as follows: $\bb(f)(n):=0$ if $f(n)=0$, and $1$ otherwise.  Also, define $\rr(f):=\sum_{n=0}^{\infty} \frac{\bb(f)(n)}{2^{n}}$ as the real in $[0,1]$ coded by $f\in\N^{\N}$. 
For $\Psi:\R\di \R^{+}$, the following formula is trivial (take $g=f$ and large $n$):
\[\textstyle
(\forall f\in \N^{\N})(\exists n\in \N)\big[(\exists g\in \N^{\N})[ B(\rr(f), \frac{1}{2^{n}}) \subset B(\rr(g), \Psi(\rr(g)))   ]   \big], 
\]
which merely expresses that for every $x\in [0,1]$, there is $n\in \N$ and $y\in [0,1]$ such that $B(x, \frac{1}{2^{n}})\subset B(y, \Psi(y))$.
Applying $\NFP$ with parameter $\Psi$, we obtain $\gamma\in K_{0}$ such that 
\[\textstyle
(\forall f\in \N^{\N})\big[(\exists g\in \N^{\N})[ B(\rr(f), \frac{1}{2^{\gamma(f)}}) \subset B(\rr(g), \Psi(\rr(g)))   ]   \big].
\]
Now compute an upper bound for $\gamma$ on $2^{\N}$, using the Kleene associate for the fan functional (\cite[\S8.3.2]{longmann}).  
This upper bound yields the required Lebesgue number, which only depends on $\gamma^{1}$, not on $\Psi$, i.e.\ we have obtained a \emph{strong} $N$-reduction. 
\qed
\end{proof}
We conjecture that $\HBU$ does not strongly $\mu N$-reduce to the fragment of $\NFP$ from Theorem \ref{kilo}.

\subsection{On the uncountability of $\R$}\label{flonk}
We study one of the most (in)famous properties of $\R$, namely its uncountability, established by Cantor in 1874 as part of his/the first set theory paper (\cite{cantor1}).
The following two principles were first studied in \cite{dagsamX, dagsamXI}. 
\begin{itemize}
\item $\NIN$: there is no injection from $[0,1]$ to $\N$.  
\item \textbf{Cantor's theorem}: for a set $A\subset [0,1]$ and $Y:[0,1]\di \N$ injective on $A$, there is $x\in \big([0,1]\setminus A)$.
\end{itemize}
A trivial manipulation of definitions shows that $\NIN$ and Cantor's theorem are logicially equivalent.  We however have the following theorem and 
associated Conjecture \ref{durfke}. 
\begin{thm}~
\begin{itemize}
\item The problem $\NIN$ $N$-reduces to the Heine-Borel theorem $\HBU$. 
\item Cantor's theorem $N$-reduces to the Heine-Borel theorem $\HBU$ restricted to Baire class $2$ functions.
\end{itemize}
\end{thm}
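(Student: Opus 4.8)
The plan is to obtain both items by adapting the classical proof that $[0,1]$ is uncountable via the Heine-Borel theorem, while routing every application of the given type-two data through the post-processing term $s$ of \eqref{koch5}: a Turing machine cannot itself evaluate a type-two functional, so $s$ (being a term of G\"odel's $T$) will do the evaluations and hand the resulting type-one table to $\{e\}$. In both cases the covering will be engineered so that the balls sitting on the relevant ``small'' set have total length below $1$, whence some subcover point must escape that set.

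For the first item, fix a candidate injection $Y^{2}$ and let $t(Y)=\Psi$ be given by $\Psi(x):=\frac{1}{2^{Y(x)+3}}$, which is primitive recursive in $Y$ and lands in $\R^{+}$. Feeding $\Psi$ to $\HBU$ produces $x_{0},\dots,x_{k}\in[0,1]$ covering $[0,1]$ with the balls $B(x_{i},\Psi(x_{i}))$ of length $\frac{1}{2^{Y(x_{i})+2}}$. If the distinct reals among $x_{0},\dots,x_{k}$ carried pairwise distinct $Y$-values, the total length would be at most $\sum_{n\in\N}\frac{1}{2^{n+2}}=\frac12<1$, contradicting the elementary fact that a finite open-interval cover of $[0,1]$ has total length at least $1$; hence there are $i<j\leq k$ with $x_{i}\neq_{\R}x_{j}$ yet $Y(x_{i})=Y(x_{j})$. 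The term $s(Y,x)$, where $x$ codes $\langle x_{0},\dots,x_{k}\rangle$, tabulates the pairs $(x_{i},Y(x_{i}))$ into one oracle in $\N^{\N}$; then $\{e\}^{s(Y,x)}$ dovetails, over all $i<j$ with equal tabulated $Y$-values, the $\Sigma_{1}^{0}$ search witnessing $x_{i}\neq_{\R}x_{j}$, halts on the guaranteed pair, and outputs the collision. Since $s$ genuinely consults $Y=Z$, this is a plain (not strong) $N$-reduction.

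For the second item, the hypothesis that $Y$ is injective on $A$ forces $A$ to be countable, and this is exactly what keeps us inside Baire class $2$. Let $Z$ code $(\chi_{A},Y)$ and set $t(Z)=\Psi$ with $\Psi(x):=\frac{1}{2^{Y(x)+3}}$ for $x\in A$ and $\Psi(x):=\frac12$ otherwise. Writing $A=\{a_{n}:n\in\N\}$ with $a_{n}$ the unique point (if any) of $A$ with $Y(a_{n})=n$, the piece $\frac{1}{2^{Y(x)+3}}\chi_{A}(x)$ equals the pointwise sum $\sum_{n}\frac{1}{2^{n+3}}\chi_{\{a_{n}\}}$ of scaled singleton characteristic functions, each upper semicontinuous and hence Baire class $1$, so this sum is Baire class $2$; adding $\frac12(1-\chi_{A})$ keeps $\Psi$ in Baire class $2$, so we may apply $\HBU$ restricted to Baire class $2$ to $\Psi=t(Z)$ and obtain a subcover $x_{0},\dots,x_{k}$. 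If all $x_{i}$ lay in $A$, injectivity of $Y$ on $A$ would again force total length at most $\frac12<1$, impossible, so some $x_{j}\notin A$. Here $s(Z,x)$ tabulates the bits $\chi_{A}(x_{i})$ alongside the reals $x_{i}$, and $\{e\}^{s(Z,x)}$ returns the least $i$ with $\chi_{A}(x_{i})=0$, a point of $[0,1]\setminus A$. The \textbf{main obstacle} is precisely this last construction's standing promise: verifying that $t(Z)$ really lands in Baire class $2$, which hinges on using the countability of $A$ to present $\Psi$ as a pointwise limit of Baire class $1$ functions; once that is in place, the length-counting and the tabulation mirror the first item.
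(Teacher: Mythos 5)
Your proposal is correct and takes essentially the same approach as the paper: the same covering functionals (up to choice of constants), the same measure-counting argument yielding a collision for $\NIN$ resp.\ a point outside $A$ for Cantor's theorem, and the same tabulate-then-search post-processing via the term $s$ and the oracle Turing machine. The only differences are minor: your Baire class $2$ verification via pointwise limits of Baire class $1$ functions is in fact more careful than the paper's one-line justification (which appeals to countably many points of discontinuity, a property that can fail when $A$ is dense), and you decide membership in $A$ by tabulating $\chi_{A}$-bits directly rather than by the paper's effective rational comparison of the tabulated values of the covering function.
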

\begin{proof}
For the first part, fix $Z:[0,1]\di \N$ and define $t(Z)(x):=\frac{1}{2^{Z(x)+1}}$ motivated by the notation in \eqref{koch5}.  
In case $x_{0}, \dots, x_{k}\in [0,1]$ is a finite sub-covering of $\cup_{x\in [0,1]}B(x, t(Z)(x))$, there are $i, j\leq k$ with 
\be\label{ponk}
Z(x_{i})=Z(x_{j})\wedge x_{i}\ne x_{j}.
\ee
Indeed, in case there are no $i, j\leq k$ as in \eqref{ponk}, then the measure of $\cup_{i\leq k} B(x_{i}, t(Z)(x_{i}))$ is at most $\sum_{n=0}^{k}\frac{1}{2^{i+1}}<1$, contradicting the fact that $\cup_{i\leq k}B(x_{i}, t(Z)(x_{i}))$ covers $[0,1]$. 
In light of \eqref{ponk}, given the finite sequence $s(Z, x_{0}, \dots, x_{k})$ defined as $ x_{0}, t(Z)(x_{0}), \dots, x_{k}, t(Z)(x_{k})$, we can perform an unbounded search (on a Turing machine) to find $i, j\leq k$ and $k\in \N$ such that $t(Z)(x_{i})=_{\Q} t(Z)(x_{j})$ and $[|x_{i}-x_{j}|](k)>_{\Q}\frac{1}{2^{k}} $, where $[z](m)$ is the approximation of $z\in \R$ up to $\frac{1}{2^{m+1}}$.
Hence, we also obtain the consequent of \eqref{koch5} for the case at  hand. 

\medskip

For the second part, fix $A\subset [0,1]$ and $Y:[0,1]\di \N$ such that $Y$ is injective on $A$.  
Now consider the following:
\[
t(Y, A)(x):=
\begin{cases}
\frac{1}{2^{Y(x)+5}} & x\in A\\
\frac{1}{8} & x \not \in A
\end{cases}.
\]
One readily shows that $t(Y, A):\R\di \R$ is Baire class 2, as it only has countably many points of discontinuity by definition.  
For a finite sub-covering $x_{0}, \dots, x_{k}\in [0,1]$ of $\cup_{x\in [0,1]}B(x, t(Y, A)(x))$, there must be $j\leq k$, with $x_{j}\not \in A$.  
Indeed, as in the previous paragraph, the measure of $\cup_{i\leq k} B(x_{i}, t(Y, A)(x_{i}))$ is otherwise at most $\sum_{n=0}^{k}\frac{1}{2^{i+5}}<1$, a contradiction. 
One can effectively decide whether $t(Y, A)(x_{i})<\frac{1}{8}$ or $t(Y, A)(x_{i})>\frac{1}{16}$ for $i\leq k$, i.e.\ one readily finds a $j\leq k$ with $x_{j}\not \in A$.  \qed
\end{proof}
In light of the previous proof, the `post-processing' term $s$ in \eqref{koch5} again seems necessary as a Turing machine cannot evaluate a third-order functional at a point due to type restrictions.  

\medskip

Based on the previous proof, we conjecture the following. 
\begin{conj}\label{durfke}
The problem $\NIN$ does \textbf{not} $N$-reduce to the Heine-Borel theorem $\HBU$ restricted to Baire class $2$ functions, nor to the \(full\) Lebesgue number lemma.
\end{conj}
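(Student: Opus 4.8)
The plan is to argue by contradiction in both cases, assuming a reduction witnessed by terms $s,t$ of G\"odel's $T$ and an index $e\in\N$ as in \eqref{koch5}, and then to defeat it with an adversarial input $Z^{2}$. The organising principle is that closed terms of G\"odel's $T$ denote functionals that, applied to \emph{any} (possibly discontinuous) input $Z$, interrogate $Z$ at only finitely many type-$1$ arguments per evaluation, since pure primitive recursion terminates without unbounded search; likewise the oracle machine $\{e\}^{s(Z,x)}$ reads its oracle continuously. Consequently, across the computation producing the output the whole reduction accesses $Z$ only on a \emph{countable} set $S$ of reals, and it is on $S$ that we shall arrange $Z$ to be injective while leaving the collisions that $\NIN$ forces among the uncountably many reals outside $S$. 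Since $S$ is countable and $[0,1]$ is not, this is possible; the resulting output pair then lies in $S\times S$ and carries distinct $Z$-values, so it is not a genuine witness of non-injectivity, contradicting \eqref{koch5}.

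I would treat the Lebesgue number lemma first, as it is informationally the tightest case: its solution is a single real $\delta>0$, so the post-processor $\{e\}^{s(Z,\delta)}$ must locate two distinct reals with equal $Z$-value guided only by $\delta$ and oracle access to $Z$. Here the covering $t(Z)$ may be arbitrary, but that does not help: $\delta$ encodes no pointer to where the hidden collision sits, and by the continuity principle above the machine can only name reals in a countable set $S$ determined by $\delta$ and $Z$. The construction of $Z$ injective on $S$ is then carried out stagewise, deciding $Z$ on the finitely many freshly queried reals at each step so as to preserve injectivity, and extending $Z$ arbitrarily (with repetitions) off the countable union; because the computation never consults $Z$ off $S$, it produces the same non-collision pair for the genuine total $Z$.

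For $\HBU$ restricted to Baire class $2$ functions the argument needs one extra ingredient, because the $\HBU$-oracle hands the post-processor actual reals, namely the finite subcovering $x_{0},\dots,x_{k}$, which are \emph{not} continuously determined from $Z$. Indeed the unrestricted reduction of the preceding theorem succeeds precisely because the covering $x\mapsto \tfrac{1}{2^{Z(x)+1}}$ forces, by a geometric-series measure estimate, a repeated $Z$-value \emph{into every} finite subcovering; but this mechanism requires radii that decay with $Z(x)$ and hence makes the covering exactly as wild as $Z$. Under the Baire class $2$ restriction $t(Z)$ must be tame, continuous off a countable set, and I would prove that such a covering never forces a collision: for a suitably adversarial $Z$ there is always a valid finite subcovering whose centres, together with the countably many discontinuity points of $t(Z)$, can be absorbed into the accessible set $S$ on which $Z$ is injective. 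Feeding that collision-free subcovering as the witness $x$ and invoking the continuity principle then yields a non-collision output, as before. This also explains the contrast with Cantor's theorem, whose reduction survives the Baire class $2$ restriction only because it must force a point \emph{off the given countable set} $A$ --- a condition a tame covering can enforce by a radius that is distinguishable off $A$ --- rather than a genuine collision.

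The hard part will be twofold. First is the self-reference inherent in the construction: the accessible set $S$ is defined through $s,t,e$ applied to $Z$, while $Z$ is built to be injective on $S$ and, in the covering case, a collision-free subcovering must be chosen for the very covering $t(Z)$ that $Z$ determines; resolving this demands a careful stagewise fixed-point construction, verifying at each stage that only finitely many reals are committed, that injectivity on the committed set is maintainable, and that the partial subcovering can still be completed to a valid one. Second, and most delicate, is making tameness genuinely \emph{bite}: one must rule out ingenious Baire class $2$ coverings whose countably many discontinuities are themselves placed, as a continuous function of $Z$, so as to force a repeated $Z$-value into every subcovering. Establishing that no such covering exists --- i.e.\ that the measure-pigeonhole forcing is unavailable once radii are constrained to be tame --- is the crux on which the whole conjecture turns, and is the step I expect to require the most work.
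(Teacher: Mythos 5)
First, a point of order: the statement you are addressing is stated in the paper as Conjecture \ref{durfke} and is \emph{not proved there} --- it is an open problem, motivated by the fact that the paper's reduction of $\NIN$ to $\HBU$ uses a covering whose radii decay with $Z(x)$ and is therefore as discontinuous as $Z$ itself. So there is no paper proof to compare against; the only question is whether your argument settles the conjecture, and it does not.

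The central gap is this. Your organising principle --- that $s$, $t$ and the oracle machine $\{e\}$ consult $Z$ only on a countable set $S$, so that one may make $Z$ injective on $S$ and extend it ``arbitrarily (with repetitions)'' off $S$ --- correctly controls the \emph{conclusion} of \eqref{koch5}: the output of $\{e\}^{s(Z,x)}$ is unchanged if $Z$ is modified off its use set. But it does not control the \emph{premise} $A(t(Z),x)$, and the premise is exactly what your adversarial $Z$ must keep true. For the Lebesgue number lemma, ``$\delta$ is a Lebesgue number for $\cup_{y}B(y,t(Z)(y))$'' quantifies over \emph{all} points of $[0,1]$, hence depends on $t(Z)(y)$ for uncountably many $y$, hence on values of $Z$ far outside any countable set; modifying $Z$ off $S$ can shrink balls at uncountably many centres (note that $t(Z)(y)$ may query $Z$ at points other than $y$, e.g.\ a single value $Z(r_{0})$ may influence the whole covering) and destroy the Lebesgue property, after which the pair $(Z,\delta)$ no longer satisfies the antecedent of \eqref{koch5} and nothing is contradicted. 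Worse, $\delta$ must be fixed \emph{before} the computation that generates $S$, while its validity must hold for the $Z$ you finish building afterwards; this is the genuine fixed-point problem, and it makes the Lebesgue case the \emph{hardest} one for your method, not, as you claim, the ``informationally tightest'' and easiest. The same fragility recurs in the $\HBU$ case: validity of a finite subcover is indeed determined by finitely many values $t(Z)(x_{i})$, but the side condition that $t(Z)$ be Baire class $2$ is again a global property of $Z$, which your free extension off $S$ must be shown to preserve --- otherwise the premise is vacuous for your $Z$ and the reduction is unchallenged. Finally, your second ``hard part'' (that no Baire class $2$ covering can force a collision into every finite subcovering) is not technical residue: it \emph{is} the conjecture, restated, and deferring it means the proposal is an attack plan rather than a proof.

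Two smaller inaccuracies, both patchable: a term of G\"odel's $T$ applied to a type-2 argument can make countably many (not finitely many) oracle calls per evaluation, though countability is all you need; and the output pair need not ``lie in $S\times S$'', since the machine outputs reals as Cauchy sequences that need not be query points --- one must instead assign fresh distinct $Z$-values to output reals outside $S$. These can be repaired; the premise-preservation problem above cannot, at least not by the means you describe.
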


\subsection{Discontinuous functions}\label{disc}
We show that a representative equivalence from the Reverse Mathematics literature involving $(\exists^{2})$ gives rise to $N$-reductions between the members of the equivalence.  
That $N$-reduction applies here was surprising to us, as the existence of a discontinuous function like $\exists^{2}$ does not have 
the syntactic form \eqref{koch}. 

\medskip

A central role is played by \emph{Grilliot's trick}, a method for (effectively) obtaining $\exists^{2}$ from a discontinuous function (\cite{grilling}).  
We discuss this trick in some detail in Section \ref{krilli}, while the connection between this trick and $N$-reduction is discussed in Section \ref{koli}.

\subsubsection{Grilliot's trick}\label{krilli}
In a nutshell, \emph{Grilliot's trick} is a method for effectively obtaining $\exists^{2}$ from a discontinuous function, say on $\N^{\N}$ or $\R$.  
Clearly, $\exists^{2}$ is discontinuous at $11\dots$, making the former functional a kind of `canonical' discontinuous function. 

\medskip

First of all, Grilliot's paper \cite{grilling} pioneers the aforementioned method, nowadays called Grilliot's trick; we refer to \cite[Remark 5.3.9]{longmann} for a discussion of the general background and history.    
We note that Kohlenbach formalises Grilliot's trick in a weak logical system (namely his `base theory' $\RCAo$) in \cite[\S3]{kohlenbach2}.

\medskip

Secondly, Kohlenbach's rendition of Grilliot's trick (\cite[\S3]{kohlenbach2}) is quite easy to understand conceptually.  Indeed, assume we have a function $F:\R\di \R$ and a sequence $(x_{n})_{n\in \N}$ with $\lim_{n\di \infty}x_{n}=x$ such that $\lim_{n\di \infty}F(x_{n})\ne F(x)$, i.e.\ $F$ is not sequentially continuous at $x$.
Then there is a term $t^{3}$ of G\"odel's $T$ of low complexity such that $E(f):=\lambda f^{1}.t(F, \lambda n. x_{n}, x, f)$ is Kleene's $\exists^{2}$ as in \eqref{frlu}.  
All technical details, including the exact definition of $t$, are found in \cite[\S3]{kohlenbach2}.

\medskip

Thirdly, Kohlenbach uses Grilliot's trick in \cite[\S3]{kohlenbach2} to show that e.g.\ the following sentence implies the existence of $\exists^{2}$:
\be\label{tonk}
\underline{(\exists \varepsilon)(\forall g\in L([0,1]))}[\varepsilon(g) \in [0,1]\wedge (\forall y\in [0,1])(    g(y)\leq g(\varepsilon(g))) ].
\ee
Here, $\varepsilon(g)$ is a real in $[0,1]$ where the Lipschitz-continuous\footnote{A function $g:[0,1]\di \R$ is \emph{Lifschitz-continuous} with constant $1$ on $[0,1]$ if $(\forall x, y\in [0,1])(|g(x)-g(y)|< |x-y|  )$.  Hence, to (effectively) recover the graph of $g$, it suffices to have access to the sequence $(g(q))_{q\in \Q\cap [0,1]}$.\label{doeme}} function $g:[0,1]\di \R$ with constant $1$ attains its maximum.  
We note that the underlined quantifiers can be brought in the form$^{\ref{doeme}}$ `$(\exists Y^{2})(\forall \alpha^{1})$', which can also be obtained by representing continuous functions via second-order codes.

\subsubsection{Discontinuous functions and $N$-reduction}\label{koli}
In this section, we discuss the connection between Grilliot's trick from Section \ref{krilli} and $N$-reduction.  
In particular, we show that the proof of \cite[Prop.\ 3.14]{kohlenbach2}, establishing the equivalence $\eqref{tonk}\asa (\exists^{2})$, gives rise to $N$-reductions involving \eqref{tonk} and $(\exists^{2})$. 

\medskip

First of all, consider \eqref{tonk} from Section \ref{krilli}.  The functional $\eps$ from \eqref{tonk} yields a discontinuous function on $\R$, which yields $\exists^{2}$ in turn, following the proof of \cite[Prop.\ 3.14]{kohlenbach2}. 
If we make all steps in the latter proof explicit\footnote{The construction of a discontinuous function on $\R$ in the proof of \cite[Prop.~3.14]{kohlenbach2} depends on whether $\eps(g_{0})\in [0, \frac{1}{2}]$ or $\eps(g_{0})\in [\frac{1}{2},1]$, where $g_{0}$ is the constant $0$ function and $\eps$ as in \eqref{tonk}.  This non-effective case distinction can be replaced by an effective case distinction whether $\eps(g_{0})<\frac34$ or $\eps(g_{0})>\frac14$. The proof in the first case goes through unmodified, while one replaces $yx$ in the second case by $-yx$.}, we obtain a term $t$ of G\"odel's $T$ of low complexity such that 
\be\label{koleire}
(\forall \eps) \big [(\forall g) A(g, \eps(g))\di   (\forall f^{1})B(t(\eps), f)    \big],  
\ee
where $A(g, x )$ expresses that $x\in [0,1]$ is a real where the Lipschitz-continuous function $g:[0,1]\di \R$ with Lipschitz constant $1$ attains its maximum; the formula $(\forall f^{1})B(\exists^{2}, f)$ is \eqref{frlu}, i.e.\ the specification of $\exists^{2}$. 
Clearly, \eqref{koleire} implies by contraposition that:
\be\label{koleire2}
(\forall \eps, f^{1}) \big [\neg B(t(\eps), f)\di (\exists g) \neg A(g, \eps(g))    \big],
\ee
which is `almost' the definition of $N$-reduction as in \eqref{koch5}.  Indeed, `$(\exists g)$' in \eqref{koleire2} is essentially a quantifier over $\R$ by Footnote \ref{doeme}, whence $(\forall \varepsilon)$ can be viewed as a quantifier $(\forall Z^{2})$. 
Furthermore, a detailed inspection of the proof that \eqref{tonk} implies the existence of $\exists^{2}$ in \cite[Prop.\ 3.14]{kohlenbach2}, reveals the following: this proof still goes through if we restrict \eqref{tonk} to a sentence of the form:
\be\label{flunksd}
{(\exists \varepsilon)(\forall n^{0}))}[   \varepsilon(g_{n}) \in [0,1]\wedge (\forall q\in [0,1]\cap \Q)(    g_{n}(q)\leq g_{n}(\varepsilon(g_{n}))) ],
\ee
for some effective\footnote{The join of the sequences $(qx-q)_{q\in \Q\cap [0,1]}$ and $(-qx)_{q\in \Q\cap [0,1]}$ suffices.} sequence of functions $(g_{n})_{n\in\N}$ all in $L([0,1])$.  
In case the formula in square brackets in \eqref{flunksd} is false for some $n\in \N$, an unbounded search will yield this number. 
Hence, we can replace `$(\exists g) \neg A(g, \eps(g)$' in \eqref{koleire2} by 
\[
{\{e\}^{s(\eps, f)}\downarrow} \wedge\neg A(\{e\}^{s(\eps, f)}, \eps(\{e\}^{s(\eps, f)})\big) 
\]
for some index $e\in \N$ and term $s$ of G\"odel's $T$, which is exactly \eqref{koch5}.
The details are somewhat tedious, but we nonetheless can say that the negation of \eqref{tonk} $N$-reduces to the negation of $(\exists^{2})$. 

\medskip

Finally, the usual `interval-halving' proof of the existence of a maximum of a continuous function on $[0,1]$, can be done using $\exists^{2}$, yielding a term $t$ of G\"odel's $T$ such that:
\be\label{koleire5}
(\forall E^{2}) \big [   (\forall f^{1})B(E, f)\di (\forall g) A(g, t(E)(g))    \big].
\ee
The contraposition of \eqref{koleire5} then has the same form as \eqref{koleire2}.
One readily obtains an index $e\in \N$ and term $s$ of G\"odel's $T$ with
\be\label{koleire6}
(\forall E^{2}, g) \big [   \neg A(g, t(E)(g))\di  [{\{e\}^{s(E, g)}\downarrow} \wedge   \neg B(E, \{e\}^{s(E, g)})  \big],
\ee
as one only needs to decide $g(r)\geq g(q)$ for $r, q\in [0,1]\cap \Q$ to find a maximum of a (Lipschitz) continuous function $g:[0,1]\di \R$.
Hence, an unbounded search on a Turing machine will find $f^{1}$ with $\neg B(E, f)$.  We note that \eqref{koleire6} is a case of $N$-reduction of the negation of $(\exists^{2})$ to the negation of \eqref{tonk}.

\medskip

In conclusion, we observe that the negation of $(\exists^{2})$ will $N$-reduce to the negation of \eqref{tonk}, and vice versa. 
Thus, it perhaps makes sense to drop the `negation of' here and distinguish between \eqref{koch} and its negation in the definition of $N$-reduction.

\begin{ack}\rm
I thank Anil Nerode for his most helpful advise.
My research was kindly supported by the \emph{Deutsche Forschungsgemeinschaft} via the DFG grant SA3418/1-1.  
I thank the anonymous referees for their suggestions, which have greatly improved this paper.  
\end{ack}

\section*{Bibliography}
\begin{biblist}
\bibselect{allkeida}
\end{biblist}

\end{document}